\DeclareMathOperator*{\argmin}{arg\,min}
\providecommand{\algorithmname}{Algorithm}
\theoremstyle{plain}
\newtheorem{thm}{\protect\theoremname}
\theoremstyle{plain}
\theoremstyle{definition}
\theoremstyle{remark}
\theoremstyle{plain}
\theoremstyle{plain}
\newtheorem{lem}[thm]{\protect\lemmaname}
\newenvironment{proof}[1][\protect\proofname]{\par
\normalfont\topsep6\p@\@plus6\p@\relax
\trivlist
\itemindent\parindent
\item[\hskip\labelsep\scshape #1]\ignorespaces
}{%
\endtrivlist\@endpefalse
}
\providecommand{\proofname}{Proof}
\theoremstyle{remark}
\providecommand{\claimname}{Claim}
\providecommand{\definitionname}{Definition}
\providecommand{\lemmaname}{Lemma}
\providecommand{\propositionname}{Proposition}
\providecommand{\remarkname}{Remark}
\providecommand{\theoremname}{Theorem}
\providecommand{\corollaryname}{Corollary}
\newcommand{\x}{\textbf{x}}
\newcommand{\z}{\textbf{z}}
\newcommand{\bb}{\beta}
\newcommand{\w}{\textbf{w}}
\newcommand{\EX}{\mathbb{E}}
\newcommand{\R}{\mathbb{R}}
\newcommand{\G}{\mathcal{G}}
\newcommand{\E}{\mathcal{E}}
\newcommand{\V}{\mathcal{V}}
\newcommand{\N}{\mathcal{N}}
\newcommand{\LA}{\mathcal{L}}
\newcommand{\pip}{\bar{\pi}}
\newcommand{\tl}{\tilde{\lambda}}
\newcommand{\rr}{\rho}
\begin{document}

\title{Linearly Convergent Asynchronous Distributed ADMM via Markov Sampling }

\author{S. M. Shah and K. Avrachenkov}

\maketitle

\begin{abstract}
We consider the consensual distributed optimization problem and propose an asynchronous version of the Alternating Direction Method of Multipliers (ADMM) algorithm to solve it.  The 'asynchronous' part here refers to the fact that only one node/processor is updated at each iteration of the algorithm. The selection of the node to be updated is decided by simulating a Markov chain. The proposed algorithm is shown to have a linear convergence property in expectation for the class of functions which are strongly convex and continuously differentiable. 
\end{abstract}

\section{Introduction }

In this paper we consider the following optimization problem :
\begin{equation}\label{intro-pron}
\min_{x \in \mathbb{R}^n} \sum_{i=1}^N f_i(x)
\end{equation}
where $f^i \,:\,\mathbb{R}^n \to \mathbb{R}$. A lot of problems of interest in data science, network systems and autonomous control can be formulated in the above form. The most prevalent example comes from machine learning where the above formulation  is used in Empirical Risk Minimization (see \cite{bot}). It involves approximating the following problem
\begin{equation}\label{prob1}
\min_{x \in \Omega} \big\{ \mathbb{E}_{\xi}\, [F(x, \xi) ] \doteq \int F(x,\cdot)\, dP(\cdot) \big\},
\end{equation}
where $P$ is the probability law of the random variable $\xi$. While it may be desirable to minimize (\ref{prob1}), such a goal is untenable when one does not have access to the law $P$ or when one cannot draw from an infinite population sample set. A practical approach is to instead seek the solution of a problem that involves an estimate of the the expectation in (\ref{prob1}) giving rise to (\ref{intro-pron}). In that case one has $f^i(x) = F(x,\xi_i)$ for some realization $\xi_i$ of $\xi$.

Optimization problems of the form (\ref{intro-pron}) usually have a large $N$, which makes finding a possible solution by first order methods a computationally intensive and time consuming task.  Also, because of high dimensions, they are generally beyond the capability of second order methods due to extremely high iteration complexity. A possible remedy is to use a distributed approach. This generally involves distributing the data sets among $N$ machines, with each machine having its own estimate of the variable $x$. Such data distribution may necessitated by the fact that all of it cannot be stored in one machine.Also,  data may also be distributed across different machines because of actual physical constraints (for instance, data may be collected in a decentralized fashion by the nodes of a network which has communication constraints). This has led to an increasing interest in developing distributed optimization algorithms to solve (\ref{intro-pron}).\\

\textbf{Contributions :} In this paper we propose an asynchronous variant of ADMM to solve problems of the form (\ref{intro-pron}). Asynchronous  here means that only a single node of the network is randomly updated at any instant of the algorithm while the rest of the nodes do not perform any computation. This helps in overcoming a major dis-advantage of synchronous algorithms wherein all the nodes have to update before proceeding to the next step. This is usually a deterrent to distributed computing since the different nodes tend to have different processing speed, may suffer network delays or failure which will harm the progress of the algorithm. A remedy for these problems is to consider asynchronous algorithms. Our contributions here are as follows : We formulate (\ref{intro-pron}) in a form that is amenable to be solved by ADMM in a distributed fashion. The proposed algorithm to solve it uses asynchronous updates where the selection of the node to be updated is decided by simulating a Markov chain. Another advantage here is that since all consensus ADMM algorithms use local information, the agent updating it's information at a given time will use the information of at least one neighbour which has been updated in the most recent step, thereby ensuring progress. The second contribution here is that the proposed algorithm is shown to have a \textit{linear convergence} property in expectation for the class of functions which are strongly convex and continuously differentiable. This, to the best of our knowledge, is the first work that considers \textit{fully distributed }(not using central server architectures) ADMM algorithm with Markov sampling and establishes a linear convergence.\\

\textbf{Related Literature :} We briefly discuss the relevant ADMM literature here, for a survey of the general ADMM algorithm, we refer the reader to \cite{Boyd}. The literature on distributed optimization is vast building upon the works of \cite{Ber} for the unconstrained case and \cite{Ned} for constrained case. \cite{Ber} was one of the earliest works to address the issue of achieving a consensus solution to an optimization problem in a network of computational agents. In \cite{Ned}, the same problem was considered subject to constraints on the  solution. The works \cite{Ram} and \cite{Kun} extended this framework and studied different variants and extensions (asynchrony, noisy links, varying communcation graphs, etc.). The non-convex version was considered in \cite{Bia}. There has been a lot research on solving distributed optimization problems with ADMM in the last decade. Some of the works include Chapter 6 \cite{Boyd}, \cite{Wei1}, \cite{Sch}, \cite{Shi}, \cite{Zhang} among others. The major works in the context of asynchronous ADMM include \cite{Wei2}, \cite{Bianchi}, \cite{Mota}, \cite{Kwok}, \cite{Qing} and \cite{Hui}. Incremental Markov updates have been previously considered for first order methods in \cite{Ram2} and \cite{Joh}. We leverage these ideas for the ADMM algorithm because of the ease of implementation of Markov updates for distributed optimization.\\

The organization of the paper is as follows. In Section II, we give the problem formulation and propose an asynchronous distributed ADMM algorithm to solve it. In Section III,  we present the convergence analysis of the proposed algorithm and show that it has a linear convergence rate for a certain class of functions. In Section IV, some numerical experiments are given and the concluding remarks are given in Section 5.

\section{Background}

In this section we give the details of the problem considered in the paper and the proposed  asynchronous algorithm to solve it. We consider (\ref{intro-pron}) problem in a distributed setting. We first quickly review the details of the distributed model we assume. Also, we let $n=1$ throughout the paper for simplicity. \textit{The algorithm and  its analysis for $n>1$ is the same using the Kronecker notation.} 

\subsection{Distributed model}

We consider a network of $N$ agents indexed by $1, ..., N$ and  associate with each agent $i$, the function $f_i$. For future use, let $f:\mathbb{R} \to\mathbb{R}$ denote
\begin{equation}
f(\cdot) := \frac{1}{N}\sum_{i=1}^Nf_i(\cdot).
\end{equation}

We assume the communication network is modelled by a static undirected graph
$\mathcal{G=}\{\mathcal{V},\mathcal{E}\}$ where $\mathcal{V}=\{1,...,N\}$
is the node set and $\mathcal{E}$ is the edge set of links $(i,j)$ indicating that agent $i$ and  $j$  can exchange information. We let $\mathcal{A}\subset\mathcal{V}\times\mathcal{V}$ denote the set of arcs, so that $|\mathcal{A}|=2|\E|$. We make the following assumption on the graph :\\

\noindent \textbf{Assumption 1} : The undirected graph $\mathcal{G}$ is connected.\\
 
To solve (\ref{intro-pron}) in a decentralized fashion, we  construct the transition matrix $P$ of a Markov chain (MC) , using only local information, such that the following property is satisfied :
$$[P]_{ij} > 0 \Longleftrightarrow (i,j)\in\mathcal{E}.$$
 In addition, we also want $P$ to satisfy :\\
\begin{enumerate}
\item[(N1)] {[}\textit{Irreducibility and aperiodicity}{]} The underlying graph is irreducible, i.e., there is a directed path from any  node to any other node, and  aperiodic, i.e., the g.c.d.\ of lengths of all paths from a node to itself is one. It is known that the choice of node in this definition is immaterial. This property can be guaranteed, e.g.,  by making $p_{ii}>0$ for some $i$.
\end{enumerate}
Given a connected graph $\G$, one of the ways to generate a transition matrix that satisfies the above properties is given by the Metropolis-Hastings scheme.\\

We have the following result (Theorem 5, \cite{Chat},  Lemma 4.1 \cite{Ram2}). 
\begin{lem}\label{geo-lem}
Suppose (N1) is satisfied. Then, we have
\begin{equation*}
\lim_k P^k =  \textbf{1}_N \pi^T
\end{equation*}  
where $\pi$ is a column vector denoting the stationary distribution of $P$ and $\textbf{1}_N$ is vector with all entries equal to $1$.  Furthermore, the convergence rate is geometric so that
\begin{equation}\label{geo}
|p_{ij}^k - \pi(j)| \leq b \,\gamma^k  \,\,\,\forall i,\, j,
\end{equation}
where $\gamma \in (0,1)$ and $b>0$ are some constants.
\end{lem}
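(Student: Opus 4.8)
The plan is to treat this as the classical ergodic theorem for finite Markov chains together with its geometric refinement, and to prove it via a Doeblin-type minorization rather than a spectral (eigenvalue) decomposition, since the minorization route gives explicit constants and sidesteps any complication from non-diagonalizability of $P$.

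\textbf{Step 1 (primitivity).} First I would use hypothesis (N1) to show $P$ is primitive: a finite irreducible, aperiodic stochastic matrix admits some integer $m \ge 1$ with $[P^m]_{ij} > 0$ for all $i,j$. This is the standard number-theoretic argument — aperiodicity implies that for each $i$ the set of return times to $i$ contains every sufficiently large integer, irreducibility lets one prepend a fixed-length path from $i$ to $j$, and finiteness of $\V$ makes the threshold uniform in $(i,j)$. Since $\pi$ is stationary for $P$ it is stationary for $P^m$, and $P^m>0$ forces $\pi(j)>0$ for every $j$; hence $\delta := \min_{i,j}[P^m]_{ij}/\pi(j) \in (0,1]$ is well defined and $[P^m]_{ij} \ge \delta\,\pi(j)$ for all $i,j$, a Doeblin minorization for $P^m$.

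\textbf{Step 2 (contraction).} Write the minorization as $P^m = \delta\,\textbf{1}_N\pi^T + (1-\delta)Q$ with $Q$ stochastic. Using $(\textbf{1}_N\pi^T)^2 = \textbf{1}_N\pi^T$, $Q\,\textbf{1}_N\pi^T = \textbf{1}_N\pi^T$ and (from $\pi^T P^m = \pi^T$) $\pi^T Q = \pi^T$, an easy induction gives $P^{mt} = \big(1-(1-\delta)^t\big)\textbf{1}_N\pi^T + (1-\delta)^t Q^t$, so that for each row $i$, $\lVert e_i^T P^{mt} - \pi^T\rVert_1 = (1-\delta)^t\lVert e_i^T Q^t - \pi^T\rVert_1 \le 2(1-\delta)^t$. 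For a general exponent $k$, write $k = mt+s$ with $0\le s<m$; right-multiplication by the stochastic matrix $P^s$ does not increase the $\ell_1$-distance of a row to the invariant row $\pi^T$, so $\lVert e_i^T P^k - \pi^T\rVert_1 \le 2(1-\delta)^{\lfloor k/m\rfloor} \le \frac{2}{1-\delta}\big((1-\delta)^{1/m}\big)^k$. Taking $\gamma := (1-\delta)^{1/m} \in (0,1)$ and $b := 2/(1-\delta)$, and using $|p_{ij}^k - \pi(j)| \le \lVert e_i^T P^k - \pi^T\rVert_1$, yields (\ref{geo}); letting $k\to\infty$ gives $\lim_k P^k = \textbf{1}_N\pi^T$.

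\textbf{Main obstacle.} The only genuinely delicate point is Step 1 — extracting the uniform power $m$ with $P^m>0$; everything afterward is mechanical. An alternative is to invoke Perron–Frobenius directly: $1$ is a simple eigenvalue of $P$ strictly dominating all others in modulus, so $\lVert P^k - \textbf{1}_N\pi^T\rVert = O(k^{r-1}|\lambda_2|^k)$ with $\lambda_2$ the second-largest eigenvalue modulus and $r$ the size of its largest Jordan block, and absorbing the polynomial factor into any $\gamma\in(|\lambda_2|,1)$ again gives (\ref{geo}). I prefer the minorization proof for being self-contained with explicit $b,\gamma$. Since the statement is cited from \cite{Chat} and \cite{Ram2}, a short sketch along these lines (or the citation itself) suffices.
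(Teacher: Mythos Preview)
Your argument is correct. The paper itself does not prove Lemma~\ref{geo-lem}; it merely records the statement and cites Theorem~5 of \cite{Chat} and Lemma~4.1 of \cite{Ram2}, so there is no in-paper proof to compare against. Your Doeblin minorization route is a standard and fully valid way to obtain the geometric bound with explicit constants $b=2/(1-\delta)$ and $\gamma=(1-\delta)^{1/m}$; the only cosmetic wrinkle is the degenerate case $\delta=1$ (i.e.\ $P^m=\textbf{1}_N\pi^T$ exactly), where the formula for $b$ is undefined but the conclusion is trivial for $k\ge m$ and can be absorbed into $b$ for the finitely many $k<m$. The Perron--Frobenius alternative you sketch is precisely the approach underlying the cited references, so either your self-contained argument or the citation alone suffices here.
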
 

In the distributed setting which we consider here, (\ref{intro-pron}) can be reformulated as :
\begin{equation}\label{Dist}
\begin{aligned}
\text{minimize} & \qquad \sum_{i=1}^{N} f_i(x_i), \\
\text{  subject to } & \qquad x_i =x_j \,\,\,\,\,\,\forall \,  i=1,..,N \text{ and }\, j \in \N(i)
\end{aligned}
\end{equation}
where $x^i$ is the estimate of the primal variable $x$ at node $i$. Our main aim here is to solve (\ref{Dist}) in an asynchronous distributed fashion via the ADMM method. In the next subsection we give the details of the ADMM algorithm.

\subsection{ADMM Algorithm}
The standard undistributed ADMM solves the following problem 
\begin{equation}\label{mainprob}
\begin{aligned}
\text{minimize} & \qquad  f(x) + g(z), \\
\text{subject to}  & \qquad Ax+Bz=c,
\end{aligned}
\end{equation}
where variables $x\in \mathbb{R}^n$ , $z\in \R^m$, $A\in \R^{ p\times n}$, $B\in \R^{p \times m}$, and $c \in  \R ^p$.
To solve the problem, we consider the augmented Lagrangian which is defined as 
$$
\LA_{\rho}(x,z,y) =  f(x) + g(z) + y^T (Ax + Bz - c) + \frac{\rho}{2} \|Ax + Bz - c\|^2.
$$
Starting from some initial vector $(x_0,z_0,y_0)$, ADMM consist of the following updates :
\begin{equation}\label{x}
x^{k+1} = \argmin_x \LA_{\rho}(x,z^k,y^k),
\end{equation}
\begin{equation}\label{z}
z^{k+1} = \argmin_z \LA_{\rho}(x^{k+1},z,y^k),
\end{equation}
\begin{equation}\label{y}
y^{k+1} = y^k +\rho (Ax^{k+1} + bz^{k+1} -c).
\end{equation}

\subsection{Asynchronous ADMM }
The augmented Lagrangian of problem (\ref{Dist}) can be written as :
\begin{multline*}
\LA_{\rr} ( \x,\lambda) = \sum_{i=1}^{N}f_i (x_i) + \sum_{i=1}^N \lambda_{ij} (\sum_{j \in \N(i)} x_i -x_j) +\\ \frac{\rr}{2}\sum_{i=1}^N \sum_{j \in \N(i)} (x_i -x_j) ^2,
\end{multline*}
where $\lambda_{ij}$ is the Lagrange multiplier associated with the constraint $x_i -x_j$. We note that the third term in the RHS of the above equation is not separable which  a makes a distributed implementation impossible in this formulation. There have been many re-formulations of (\ref{Dist}) in the consensus ADMM literature to make a distributed implementation possible. We use the formulation presented in Section 3.4,  Section 3.4, \cite{Bert}. Similar formulations have been previously used in \cite{Makh},  \cite{Wei2} and \cite{Qing}. To do this we introduce auxiliary variables $z_{ij}$ with each edge between any two nodes $i$ and $j$. The reformulation can be written as : 
\begin{equation}\label{Dist-2}
\begin{aligned}
\text{minimize} & \qquad \sum_{i=1}^{N} f_i(x_i), \\
\text{subject to}   & \qquad x_i=x_j =z_{ij}, \,\,\forall \,( i,j) \in \mathcal{A} ,\\
\end{aligned}
\end{equation}
The augmented Lagrangian for the above problem can be written as
\begin{multline*}
\LA_\rr (\x,\z, \lambda ) = \sum_{i=1}^{N} f_i(x_i) +   \\\sum_{i=1}^{N} \sum_{j \in \N(i) }\Big( \lambda_{ij} (x_i - z_{ij}) + \tilde{\lambda}_{ij} (x_j - z_{ij} )  \Big)  \\+ \frac{\rr}{2} \sum_{i=1}^{N} \sum_{j \in \N(i) } \Big( \|x_i - z_{ij}\|^2 +  \|x_j - z_{ij} \|^2 \Big),
\end{multline*}
where $\lambda_{ij}$ is the Lagrange multiplier associated with the constraint $x_i = z_{ij}$ for every $i \in \V$ and $\tl_{ij}$ the Lagrange multiplier associated with the constraint $x_j =z_{ij}$ for $j\in \N(i)$. The ADMM algorithm will take the following form :
\begin{multline}\label{1}
x_i(k+1)  := \argmin_{x} \Big\{ f_i(x) + \sum_{j \in \N(i) }   (\lambda_{ij}(k)+\tl_{ji}(k) ) x \\+ \frac{\rr}{2}  \sum_{j \in \N(i) }  \big( \|x  - z_{ij}(k)\|^2 +   \|x  - z_{ji}(k)\|^2  \big)  \Big\}, 
\end{multline}
\begin{multline}\label{2}
 z_{ij}(k+1)   := \argmin_{z } \Big\{  ( \lambda_{ij}(k) + \tl_{ij} ) z  \\+\frac{\rr}{2} \big( \|x_i(k+1) - z \|^2 +\|x_j(k+1) - z \|^2   \big)  \Big\} 
\end{multline}
and 
\begin{align}
\lambda_{ij} (k+1) &= \lambda_{ij} (k) + \rr \Big(x_i (k+1) -z_{ij} (k+1) \Big) \label{3}  \\
\tl_{ij} (k+1) &= \tl_{ij} (k) + \rr \Big(x_j (k+1) -z_{ij} (k+1) \Big)\label{3'}
\end{align}
We note that the update for $z_{ij}$ (eq. \ref{2}) has a closed form solution given by :
\begin{multline}\label{z-ij}
z_{ij}(k+1) =  \frac{1}{2 \rr} (\lambda_{ij}(k) + \tl_{ij}(k) ) + \frac{1}{2} (x_i(k+1) + x_j(k+1))  \\  i=1,..,N,\,j\in\N(i),
\end{multline}
Adding equations (\ref{3}) and (\ref{3'}), we have 
\begin{multline*}
\lambda_{ij} (k+1)+\tl_{ij} (k+1) = \lambda_{ij} (k) +\tl_{ij} (k)  +\\ \rr \Big(x_i (k+1) -z_{ij} (k+1)  + x_j (k+1) -z_{ij} (k+1) \Big)
\end{multline*}
Using (\ref{z-ij}) to substitute for $z_{ij}(k+1)$ in the above, we have
\begin{equation}\label{eol}
\lambda_{ij}(k+1) = - \tl_{ij}(k+1)
\end{equation}
So if $\lambda_{ij}(0)=-  \tl_{ij}(0) $, we have $\lambda_{ij}(k) = - \tl_{ij}(k)$ for all $k$. The update of $z_{ij}$ (see (\ref{z-ij})) can be simplified to
\begin{equation}\label{zzzz}
z_{ij}(k+1) = \frac{1}{2} \Big( x_i(k+1) +x_j(k+1) \Big)
\end{equation}
Also, if we use (\ref{zzzz}) in (\ref{3}), we have 
\begin{equation}\label{anotherequality}
\lambda_{ij}(k+1) =\lambda_{ij}(k) + \frac{c}{2}\big(  x_i(k+1) - x_j(k+1) \big).
\end{equation}
So,  if we set $\lambda_{ij}(0)=-\lambda_{ji}(0)$, we have
\begin{equation}\label{anotherequality-1}
\lambda_{ij}(k)=-\lambda_{ji}(k)
\end{equation}
for all $k$. Let $\mu_i(k) := 2 \sum_{j \in \N(i)} \lambda_{ij}(k) $. The update of $x_i$ (see (\ref{1})) can then be written in a form amenable to distributed implementation :
\begin{multline}\label{final-x-upadate}
x_i(k+1)  := \argmin_{x} \Big\{ f_i(x) + \mu_i^T(k)  x \\+ \rr \sum_{j \in \N(i) }  \|x  - \frac{x_i(k) + x_j(k)}{2}\|^2   \Big\},
\end{multline}
where we have used the fact that $\lambda_{ji}(k) \stackrel{\text{(\ref{eol})}}{=}-\tl_{ji}(k+1)$ and $\tl_{ji}(k+1) \stackrel{\text{(\ref{anotherequality-1})}}{=} -\lambda_{ji}(k)  $ in the second term of RHS of (\ref{1}) and (\ref{zzzz}) in the third term. 
\begin{algorithm}[H]
\textbf{Input :}
Graph $\G$, Transition Matrix $P$, Functions $\{f_i\}_{i=1}^N$, Time Step $\rho$.\\

\textbf{Initial Conditions :}

Initialize $\mathbf{x}_{0} \in \R^N$ and $\mu_0\in \R^N$. Let $i_0 \in \mathcal{V}$ be the initial state of the Markov chain.\\

\textbf{Algorithm :}\\

For $k=0,1,2.....$ do : \\
\begin{enumerate}

\item Let $\xi_k$ denote the state of the MC at time $k$. \\

\item  Set :
\begin{multline}\label{x-update}
x_{\xi_k}(k+1)  := \argmin_{x} \Big\{ f_i(x) + \mu_{\xi_k}^T(k)  x \\+\rr  \sum_{j \in \N(\xi_k ) }  \|x  - \frac{x_{\xi_k}(k) + x_j(k)}{2}\|^2   \Big\}, 
\end{multline}
For $ i \in \mathcal{V}/\{\xi_k\}$, set $x_i(k+1) = x_i(k) $.\\

\item  Set :
\begin{align}\label{w-update}
\mu_{\xi_k}(k+1) &= \mu_{\xi_k}(k) + \rr \sum_{j \in \N(\xi_k)} \big(  x_i(k+1) - x_j(k+1) \big).
\end{align}
For $ i \in \mathcal{V}/\{\xi_k\}$, set $\mu_i(k+1) = \mu_i(k) $.\\

\end{enumerate}

\textbf{Output :} $\{x_i\}_{t=0}^{\infty}$ for any $i \in \V$

\caption{Asynchronous ADMM }
\end{algorithm}
Also, using (\ref{anotherequality}), the update for $\mu_j$  can be written as 
\begin{equation}\label{final-mu-upadate}
\mu_i(k+1) = \mu_i(k) + \rr \sum_{i \in \N(i)} \big(  x_i(k+1) - x_j(k+1) \big).
\end{equation}
The final algorithm uses updates (\ref{final-x-upadate}) and (\ref{final-mu-upadate}). The pseudo-code for the asynchronous version is given in Algorithm 1.

\section{Convergence Analysis :} 

In this section we aim to show that the proposed algorithm has a linear convergence rate in expectation when the following condition is satisfied :\\

\noindent{\textbf{Assumption 2 :} }Each function $f_i$ is strongly convex and $L$-smooth in $\mathbb{R}$. \footnote{Hence, so is $f$.} By strong convexity, we mean that for any $x$ and $y$ in the domain of $f_i(\cdot)$, we have ,\begin{equation}\label{stron-convex}
\langle \nabla f_i(x)- \nabla f_i(y), x-y \rangle \geq \nu \|x -y \|^2
\end{equation}
with $\nu>0$. By smoothness, we mean that the gradient is $L$-Lipschitz continuous, i.e.
\begin{equation}\label{contdiff}
\| \nabla f_i(x) -\nabla f_i(y)\| \leq L \|x-y\|.
\end{equation}
We can take $L>0$ to be independent of $i$ without any loss of generality. Let $x \in \mathbb{R}^{N}$ be a vector concatenating all $x_i$ and $z$ be a vector concatenating all $z_{ij}$. Let $I_{n}$ denote the $n\times n$ identity matrix. We can rewrite the constraints in (\ref{Dist-2}) in the following matrix form :
\begin{equation}\label{mainprob-1}
\begin{aligned}
\text{minimize} & \qquad  f(x) + g(z), \\
\text{subject to}  & \qquad Ax+Bz=c,
\end{aligned}
\end{equation}
where 
\[A=
\begin{bmatrix}
    A_1     \\
    A_2     
\end{bmatrix}
\,\, \text{  and  } \,\,B=
\begin{bmatrix}
    -I_{2|\E| }       \\
    -I_{2|\E| } 
\end{bmatrix} 
\]
with $A_1, A_2 \in \mathbb{R}^{ 2|\E| \times N}$. The entries of the matrix $A$ are decided as follows : If $(i,j) \in \mathcal{A}$ and $z_{ij}$ is the $q$'th entry of $z$, then the $(q,i)$ entry of $A_1$  is one and $(q,j)$ entry of $A_2$ is one. If $\lambda$ denotes the concatenated Lagrange multipliers, we have from the KKT conditions :
\begin{equation}\label{KKT-new}
\nabla f(x(k+1)) +A^T \lambda(k) +\rr A^T(Ax(k+1) + Bz(k))=0
\end{equation}
and the dual update
\begin{equation}\label{lagg}
\lambda(k+1) = \lambda(k) + \rr(Ax(k+1) +Bz(k+1) ).
\end{equation}
We remark here (\ref{lagg}) is the same update as (\ref{3})-(\ref{3'}). To be more precise, the concatenated variable $\lambda(k)$ can be written as $ \lambda(k) = [\beta(k) ,  \gamma(k) ]$, where $\bb(k) \in \mathbb{R}^{|\E|}$ with $\bb_{ij}(k) := \lambda_{ij}(k)$ and $\gamma(k) \in \mathbb{R}^{|\E|}$ with $\gamma(k) := \tl_{ij}(k)$. We note that from (\ref{eol}), that $\lambda(k) = [\beta(k), -\beta(k)]$. Multiplying (\ref{lagg}) with $A^T$ and adding to (\ref{KKT-new}) we have, 
\begin{equation}\label{z-mega}
\nabla f(x(k+1)) +A^T\lambda(k+1) +\rr A^TB(z(k) - z(k+1))=0
\end{equation}
We let $I_+ := A_1^T +A_2^T$ denote the unoriented incidence matrix and $I_- := A_1^T -A_2^T$ denote the oriented incidence matrix. Then, (\ref{z-mega}) can be written as,
\begin{equation}\label{z-mega1}
\nabla f(x(k+1)) +I_- \beta(k) + \rr I_+ (z(k+1) - z(k))=0.
\end{equation}
Also, from (\ref{zzzz}) and (\ref{anotherequality}), we have 
\begin{equation}\label{proofz}
z(k) = \frac{1}{2} I_+^Tx(k)
\end{equation}
and 
\begin{equation}\label{proofb}
\beta(k+1) = \bb (k) + \frac{\rr}{2} I_-^Tx(k+1) 
\end{equation}
We use (\ref{z-mega1}), (\ref{proofz}) and (\ref{proofb}) to obtain our main result. Also, the KKT conditions for (\ref{mainprob-1}) give,
\begin{align}
\nabla f(x^*)+ I_- \bb^*  &= 0\label{KKT-1}\\
I_-^Tx^* &=0\label{KKT-2}\\
 z^* &= \frac{1}{2} I_+^Tx^*,\label{KKT-3}
\end{align}
where $(x^*, z^*)$ is the unique primal optimal solution and the uniqueness follows from the strong convexity of $f(\cdot)$. We note here that if $\bb(0)$ lies in the column space of $I_-$, then $\bb(k)$ also does so from (\ref{proofb}). We can also assume $\bb^*$ lies in the column space of $I_-$ without any loss of generality (see eq. (11), \cite{Wei1}). We use these facts later in the proof of our main result. We let $G \in \mathbb{R}^{4|\E| \times 4|\E|}$ denote the matrix 
\[G=
\begin{bmatrix}
    \rr I_{2|\E|}  & 0_{2|\E|}   \\
   0_{2|\E|} &  \frac{1}{\rr} I_{2|\E|}  
\end{bmatrix}
\]
Let $\pi_{\text{min}} := \min_{ \{i =1,..,N\}} \pi(i)$,  $\pi_{\text{max}} := \max_{\{i=1,..,N\}} \pi(i)$ and $\sigma_{\text{max}}(A)$, $\sigma_{\text{min}}(A)$ respectively denote the largest and smallest \textit{non-zero} singular values of the matrix $A$.

\begin{thm}\label{manthrm}
Let $w(k)$ denote the concatenated vector $ w(k):= [ z(k) \,\, \bb(k) ]$ and $ w^* := (z^*\,\, \bb^*)$, where $z^*$ and $\bb^*$ are the unique optimal primal-dual pair. We have for some $k'>0$ satisfying 
\begin{equation}\label{k'}
k' \geq \frac{1}{\ln \gamma}\,\ln \Big(\frac{(1+c)\pi_{\text{min}} - \pi_{\text{max}}}{b(2+c)} \Big)
\end{equation}
and all $k>k'$, 
$$ \EX[ \| w(k+1) - \w^*\|_G^2  ] \leq  (1- \alpha_{k'})^{k-k'}  \EX[ \|w(k') - \w^* \|_G^2], $$
where  
\begin{multline}\label{c}
c :=  \min \Big\{  \frac{(\kappa -1)\sigma^2_{\text{min}}(I_-) }{ \kappa  \sigma^2_{\text{max}}(I_+) }, \\  \frac{\nu}{\Big( \frac{\kappa L^2}{\rho \sigma^2_{\text{min}}(I_-) } + \frac{\rho}{4} \sigma^2_{\text{max}}(I_+)\Big)}  \Big\},
\end{multline}
for any $\kappa >0$ and 
$$\alpha_{k'} = \pi_{\text{min}}  - b \gamma^{k'}- \frac{1}{1+c} \Big\{ \pi_{\text{max}}+ b \gamma^{k'}  \Big\} > 0.$$
\end{thm}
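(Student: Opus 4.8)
### Proof proposal

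The plan is to establish a one-step contraction in the $G$-norm for the "virtual" full-update iteration, and then convert it into a contraction in expectation for the Markov-driven single-node update, paying the price of the geometric mixing bound from Lemma~\ref{geo-lem}. I would start by writing down the optimality relations for the virtual iterate: from \eqref{z-mega1}, \eqref{proofz} and \eqref{proofb}, together with the KKT relations \eqref{KKT-1}--\eqref{KKT-3}, I would subtract the fixed-point equations from the update equations to get error recursions for $z(k)-z^*$ and $\bb(k)-\bb^*$ purely in terms of $x(k+1)-x^*$ and $x(k)-x^*$. The natural Lyapunov quantity is $\|w(k)-w^*\|_G^2 = \rho\|z(k)-z^*\|^2 + \frac1\rho\|\bb(k)-\bb^*\|^2$. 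Using \eqref{proofz} this is $\frac\rho4\|I_+^T(x(k)-x^*)\|^2 + \frac1\rho\|\bb(k)-\bb^*\|^2$, so everything reduces to the $x$-error and the $\bb$-error.

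Next I would derive the deterministic descent inequality. The key algebraic identity is the standard ADMM three-point relation: expand $\|w(k)-w^*\|_G^2 - \|w(k+1)-w^*\|_G^2$ and use the $x$-update optimality condition \eqref{z-mega1} (which plays the role of the gradient-of-Lagrangian-equals-zero condition). Strong convexity \eqref{stron-convex} contributes a $\nu\|x(k+1)-x^*\|^2$ term; $L$-smoothness \eqref{contdiff} is used to bound the cross term $\langle \nabla f(x(k+1))-\nabla f(x^*),\,\cdot\,\rangle$ after substituting \eqref{KKT-1}; and the singular-value bounds $\sigma_{\min}(I_-),\sigma_{\max}(I_+)$ are needed to pass between $\|I_-^T(x(k+1)-x^*)\|$, $\|I_+^T(x(k)-x^*)\|$ and $\|x(k+1)-x^*\|$ (using that the relevant error vectors live in the column space of $I_-$, as noted before the theorem). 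Choosing the free parameter $\kappa>0$ and Young's inequality to split the cross terms yields exactly the constant $c$ in \eqref{c}, giving a bound of the shape
\begin{equation*}
\|w(k+1)-w^*\|_G^2 \le \|w(k)-w^*\|_G^2 - \frac{c}{1+c}\|w(k) - w^*\|_G^2 \quad\text{(virtual full update)}.
\end{equation*}

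Then comes the asynchronous part. In Algorithm~1 only node $i=\xi_k$ updates, so the actual decrease in $\|w\|_G^2$ at step $k$ is the decrease contributed by the single block associated with $i$ (and the incident arcs). Conditioning on the chain history, the probability that block $i$ is selected is governed by the $k$-step transition probabilities $p^k_{i_0 i}$, which by \eqref{geo} lie in $[\pi_{\min}-b\gamma^{k-1},\,\pi_{\max}+b\gamma^{k-1}]$. Taking expectations, the per-coordinate decrease gets weighted by these probabilities; the lower bound uses $p_{\min}(\pi_{\min}-b\gamma^{k-1})$ on the "good" (descent) terms and the upper bound $p_{\max}(\pi_{\max}+b\gamma^{k-1})$ on the "bad" terms, which is precisely how $\alpha_{k'}$ is assembled. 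The condition \eqref{k'} on $k'$ is exactly what is needed to make $\alpha_{k'}>0$ (it forces $b\gamma^{k'-1}$ small enough that the good term dominates), and one separately checks $\alpha_{k'}<1$. Since $\gamma^{k-1}$ is decreasing, $\alpha_k$ is increasing, so for all $k\ge k'$ one has $\EX\|w(k+1)-w^*\|_G^2 \le (1-\alpha_{k'})\,\EX\|w(k)-w^*\|_G^2$, and iterating gives the claim.

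The main obstacle I expect is the asynchronous accounting step: carefully relating the global $G$-norm decrease to the sum of per-node contributions, showing that the single activated node's update decreases $\|w\|_G^2$ by the $i$-th summand of the deterministic bound (no coupling terms spoil this because the $z_{ij}$ and $\lambda_{ij}$ updates only touch arcs incident to $i$), and then correctly inserting the non-uniform, history-dependent selection probabilities $p^k_{i_0 i}$ and controlling them via the geometric bound \eqref{geo} — in particular handling the fact that "descent" and "error-measuring" terms must be weighted by lower and upper probability bounds respectively to get a clean contraction factor. The purely deterministic ADMM contraction is comparatively routine, modulo bookkeeping the constants $\kappa$, $\sigma_{\min}(I_-)$, $\sigma_{\max}(I_+)$ correctly so that the optimized split reproduces \eqref{c}.
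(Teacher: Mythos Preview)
Your overall strategy matches the paper's proof: first establish the deterministic one-step contraction $\|\hat w(k+1)-w^*\|_G^2 \le \frac{1}{1+c}\|w(k)-w^*\|_G^2$ for the virtual full-synchronous update (this is stated and proved as a ``Claim'' in the paper, essentially following \cite{Wei1}), and then combine it with the Markov selection probabilities and Lemma~\ref{geo-lem} to get the expected contraction with factor $1-\alpha_{k'}$.

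One point where your write-up could lead you astray: you speak of the activated node's update ``decreasing $\|w\|_G^2$ by the $i$-th summand of the deterministic bound.'' The paper does \emph{not} attempt any per-block or per-node contraction; the deterministic inequality is only global. Instead, the asynchronous accounting is done arc-by-arc: for each $(i,j)\in\mathcal A$ one writes
\[
\EX\big[\|w_{ij}(k+1)-w^*_{ij}\|^2\big]
=(1-q_{ij}(k))\,\EX\big[\|w_{ij}(k)-w^*_{ij}\|^2\big]
+q_{ij}(k)\,\EX\big[\|\hat w_{ij}(k+1)-w^*_{ij}\|^2\big],
\]
with $q_{ij}(k)=p_{ij}p^{k-1}_{i_0 i}+p_{ji}p^{k-1}_{i_0 j}$, then replaces $q_{ij}(k)$ by the uniform bounds $\underline\delta(k),\bar\delta(k)$ from \eqref{geo}, and only after summing over all arcs obtains an inequality involving the \emph{full} quantities $\|w(k)-w^*\|_G^2$ and $\|\hat w(k+1)-w^*\|_G^2$; at that point the global contraction Claim is invoked. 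So the ``no coupling terms spoil this'' observation is used only to justify the coordinatewise mixture identity above, not a coordinatewise descent. With that caveat, your plan is the paper's plan.
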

\begin{proof}
Let $i_0$ denote the initial state of the Markov chain. We note at time $k$, the probability of updating the estimate $w_i(k)$ at the $i$'th node to $\hat{w}_i(k+1) := (\hat{z}_i(k+1) \,\, \hat{\bb}_i(k+1)) $, with $\hat{z}_i(k+1)$ given by (\ref{proofz}) and $\hat{\bb}(k+1)$ given by (\ref{proofb}),  is $p^k_{i_0i}$. The probability of $w_i(k+1) = w_i(k)$ is $1-p^k_{i_0i}$ (i.e. the event of $i$ not being updated). Thus, we have
\begin{multline}\label{11}
\EX[\| z_i(k+1) - z^*\|^2 ] = \Big(1-p^k_{i_0i} \Big) \EX[ \| z_i(k) -z^* \|^2]  \\ +p^k_{i_0i} \EX [ \|\hat{z}_i(k+1) - z^* \|^2]
\end{multline}
and 
\begin{multline}\label{22}
\EX[\| \bb_i(k+1) - \bb^*\|^2 ] = \Big(1-p^k_{i_0i} \Big) \EX[ \| \bb_i(k) - \bb^* \|^2]  \\ +p^k_{i_0i} \EX [ \|\hat{\bb}_i(k+1) - \bb^* \|^2]
\end{multline}
for all $i$. Set $\pi_{\text{min}} := \min_{ \{i =1,..,N\}} \pi(i)$ and  $\pi_{\text{max}} := \max_{\{i=1,..,N\}} \pi(i)$. Set $\bar{\delta}(k) := \pi_{\text{max}}  + b \gamma^k$ and $\underset{\bar{}}{\delta}(k) := \pi_{\text{min}} - b \gamma^k$. From Lemma \ref{geo-lem} (Section 2.1), we have 
\begin{equation}\label{piminmax}
\underset{\bar{}}{\delta}(k)  \leq p^k_{i_0i} \leq \bar{\delta}(k)
\end{equation}
Multiplying (\ref{11}) by $\rr$ and (\ref{22}) by $1/\rr$, and summing both equations along the index $i$ and using (\ref{piminmax}), we have
\begin{multline*}
\EX\big[ \sum_{i=1}^N \big\{ \rr\|z_i(k+1) - z^*\|^2 + \frac{1}{\rho}\|\bb_i(k+1) - \bb^*\|^2 \big\} \big]\\
\leq   (1-\underset{\bar{}}{\delta}(k))\EX \big[ \sum_{i=1}^N \big\{ \rr \| z_i(k+1) - z^*\|^2 + \frac{1}{\rho}\|\bb_i(k) - \bb^*\|^2 \big\} \big]\\
+  \bar{\delta}(k) \EX \big[ \sum_{i=1}^N \big\{ \rr \| \hat{z}_i(k+1) - z^*\|^2 + \frac{1}{\rho}\|\hat{\bb}_i(k) - \bb^*\|^2 \big\} \big]
\end{multline*}
which gives using the definition of $G$-norm
\begin{multline}\label{main eq-MC}
\EX[ \| w(k+1) - \w^*\|_G^2  ] \leq  (1-\underset{\bar{}}{\delta}(k)) \EX[ \| w(k) - \w^* \|_G^2 ]  + \\ \bar{\delta}(k) \EX[ \|\hat{w}(k+1) - \w^* \|_G^2]
\end{multline}
where $\w^* := (w^*,...,w^*)$. Associated  with $\hat{w}(k+1)$, we have the vector $\hat{x}(k+1)$ which satisfies (see (\ref{KKT-new})),
\begin{equation*}
\nabla f(\hat{x}(k+1)) +A^T \lambda(k) +\rr A^T(A\hat{x}(k+1) + Bz(k))=0
\end{equation*}
Also, the dual update $\hat{\lambda}(k+1) := [\hat{\beta}(k+1), -\hat{\beta}(k+1)]$ satisfies
\begin{equation*}\label{lag}
\hat{\lambda}(k+1) = \lambda(k) + \rr(A\hat{x}(k+1) +B\hat{z}(k+1) ).
\end{equation*}
Proceeding the same way as for deriving (\ref{z-mega1})-(\ref{proofb}), we get 
\begin{align}
\nabla f(\hat{x}(k+1))   +  I_- \hat{\bb}(k+1)   &= \rr I_+ (z(k) -\hat{z}(k+1 ))  \label{1-ex'}\\
\frac{\rr}{2} I_- \hat{x}(k+1)   &= \hat{\bb}(k+1) - \bb(k) \label{2-ex'}\\
\frac{1}{2} I_+ \hat{x}(k+1)  &= \hat{z}(k+1) \label{3-ex'}
\end{align}
We next prove the following claim whose proof identical to Theorem 1, \cite{Wei1}. We prove it here for the sake of completeness.\\ 

\noindent \textit{Claim :} $ \|\hat{w}(k+1) - \w^* \|_G^2 \leq \frac{1}{1+c} \|w(k+1) - \w^* \|_G^2 $, with $c$ as in (\ref{c}).\\

\noindent \textit{Proof.} Subtracting (\ref{KKT-1})-(\ref{KKT-3}) from (\ref{1-ex'})-(\ref{3-ex'}) respectively , we have the following set of equations
\begin{align}
\nabla f(\hat{x}(k+1)) - \nabla f (x^*) &= \rr I_+ (z(k) -\hat{z}(k+1 )) \nonumber\\
& \,\,\,\,\,  -I_-( \hat{\bb}(k+1) - \bb^*) \label{1-ex}\\
\frac{\rr}{2} I_- (\hat{x}(k+1) -x^*)  &= \hat{\bb}(k+1) - \bb(k) \label{2-ex}\\
\frac{1}{2} I_+ (\hat{x}(k+1) -x^*)  &= \hat{z}(k+1) - z^*\label{3-ex}
\end{align}
We have from (\ref{stron-convex}),
\begin{equation}\label{st}
\nu \| \hat{x}(k+1) -x^* \|^2 \leq \langle \nabla f(\hat{x}(k+1)- \nabla f(x^*),  \hat{x}(k+1)-x^* \rangle
\end{equation} 
Using (\ref{1-ex}), the RHS of the above can be written as :
\begin{multline*}
 \langle \nabla f(\hat{x}(k+1)- \nabla f(x^*),  \hat{x}(k+1)-x^* \rangle =\\\langle \rr I_+ (z(k) -z(k+1) ) -I_-( \hat{\bb}(k+1) - \bb^* )   , \hat{x}(k+1)-x^* \rangle 
\end{multline*}
so that 
\begin{multline*}
 \langle \nabla f(\hat{x}(k+1)- \nabla f(x^*),  \hat{x}(k+1)-x^* \rangle =\\ \langle  z(k) -\hat{z}(k+1 ),  \rr I^T_+(\hat{x}(k+1)-x^*) \rangle\\  - \langle  \hat{\bb}(k+1) - \bb^*    , I^T_- (\hat{x}(k+1)-x^*) \rangle 
\end{multline*}
Using (\ref{2-ex}) and (\ref{3-ex}) in the above, we get
\begin{multline*}
 \langle \nabla f(\hat{x}(k+1)- \nabla f(x^*),  \hat{x}(k+1)-x^* \rangle =\\ 2 \rr \langle  z(k) -\hat{z}(k+1 ), \hat{z}(k+1) - z^* \rangle\\  +\frac{2}{\rr} \langle\bb(k)-\hat{\bb}(k+1), \hat{\bb}(k+1) -\bb^*      \rangle =\\
 2 \langle  w(k) - \hat{w}(k+1), \hat{w}(k+1) -w^* \rangle_G
\end{multline*}
Using the quality $2 \langle  w(k) - \hat{w}(k+1), \hat{w}(k+1) -w^* \rangle_G=  \|w(k) -w^* \|_G- \|\hat{w}(k+1) -w^* \|_G- \|w(k) -\hat{w}(k+1) \|_G$ , we have
\begin{multline*}
 \langle \nabla f(\hat{x}(k+1)- \nabla f(x^*),  \hat{x}(k+1)-x^* \rangle =\\ \|w(k) -w^* \|_G- \|\hat{w}(k+1) -w^* \|_G- \|w(k) -\hat{w}(k+1) \|_G
\end{multline*}
and using (\ref{st}), 
\begin{multline}\label{equivalclaim}
 \nu \| \hat{x}(k+1) -x^* \|^2  \leq  \|w(k) -w^* \|^2_G \\ - \|\hat{w}(k+1) -w^* \|^2_G - \|w(k) -\hat{w}(k+1) \|^2_G
\end{multline}
To prove the claim, we just need to show that 
\begin{equation*}
\|w(k) -\hat{w}(k+1) \|^2_G + \nu \| \hat{x}(k+1) -x^* \|^2  \geq c \|\hat{w}(k+1) -w^* \|^2_G,
\end{equation*}
since adding the above to (\ref{equivalclaim}) gives the required inequality. We note that the above inequality is equivalent to 
\begin{multline}\label{equivalclaim-1}
\rr \|\hat{z}(k+1) - z(k) \|^2 + \frac{1}{\rr}\| \hat{\bb}(k+1) - \bb(k) \|^2 + \nu \| \hat{x}(k+1) -x^* \|^2 \\  \geq c \rr \|\hat{z}(k+1) - z^* \|^2 + \frac{c}{\rr}\| \hat{\bb}(k+1) - \bb^* \|^2
\end{multline}
To upper bound $\|\hat{z}(k+1) - z^*\|^2$, we use (\ref{3-ex}),
\begin{multline}\label{33}
\|\hat{z}(k+1) - z^*\|^2 = \frac{1}{4}\| I_+ (\hat{x}(k+1) -x^*)\|^2\\
\leq  \frac{1}{4} \sigma^2_{\text{max}}(I_+)\|\hat{x}(k+1) -x^* \|^2
\end{multline}
To upper bound $\| \hat{\bb}(k+1) - \bb(k) \|^2 $, we first consider for any $\kappa>0$,
\begin{multline*}
\|\rr I_+^T( z(k) -\hat{z}(k+1)) \|^2 +(\kappa-1)\|   \nabla f(\hat{x}(k+1))- \nabla f(x^*)\| \leq \\ \rr^2 \sigma^2_{\text{max}}(I_+)\|( z(k) -\hat{z}(k+1)) \|^2 +(\kappa-1)L^2 \|\hat{x}(k+1) -x^* \|^2
\end{multline*}
We have used the continuous differentiability of $f(\cdot)$ in the above (see (\ref{contdiff})). Using the inequality $\|a+b\|^2 + (\kappa -1)\|a\|^2 \geq (1-\frac{1}{\kappa})\|b \|^2 $ in the above we have,
\begin{multline*}
\|\rr I_+^T( z(k) -\hat{z}(k+1)) \|^2 +(\kappa-1)\|   \nabla f(\hat{x}(k+1))- \nabla f(x^*)\|^2\\
 \geq \Big(1-\frac{1}{\kappa} \Big)\| I_- (\hat{\bb}(k+1) -\bb^*)\|^2,
\end{multline*}
using (\ref{1-ex}). As mentioned previously $\beta_k,\, \bb^*$ lie in column space of $I_-$ so that $ \| I_- (\hat{\bb}(k+1) -\bb^*)\|^2 \geq \sigma^2_{\text{min}}(I_-)  \|  (\hat{\bb}(k+1) -\bb^*)\|$. Using this fact, we have from the last two inequalities 
\begin{multline*}
\rr^2 \sigma^2_{\text{max}}(I_+)\|( z(k) -\hat{z}(k+1)) \|^2 +(\kappa-1)L^2 \|\hat{x}(k+1) -x^* \|^2
\geq \\ 
 \Big(1-\frac{1}{\kappa} \Big)\sigma^2_{\text{min}}(I_-)  \|  \hat{\bb}(k+1) -\bb^*\|^2
\end{multline*} 
which gives
\begin{multline}\label{36}
\frac{\rr \kappa  \sigma^2_{\text{max}}(I_+) }{(\kappa -1)\sigma^2_{\text{min}}(I_-) }\|\hat{z}(k+1) - z^* \|^2 \\ + \frac{\kappa L^2}{\rr \sigma^2_{\text{min}}(I_-) } \| \hat{x}(k+1) -x(k)\|^2 
 \geq \frac{1}{\rr}\| \hat{\bb} (k+1) - \bb^*\|^2 
\end{multline}
Adding (\ref{33}) multiplied by $\rho$ to  (\ref{36}), we have,
\begin{multline}
\frac{ \rr \kappa  \sigma^2_{\text{max}}(I_+) }{(\kappa -1)\sigma^2_{\text{min}}(I_-) }\|\hat{z}(k+1) -z(k) \|^2 +\\
 \Big( \frac{\kappa L^2}{\rr \sigma^2_{\text{min}}(I_-) } + \frac{\rr}{4} \sigma^2_{\text{max}}(I_+)\Big) \| \hat{x}(k+1) -x(k)\|^2 \\
 \geq \rr \|\hat{z}(k+1) -z^* \|^2 + \frac{1}{\rr}\| \hat{\bb} (k+1) - \bb^*\|^2.
\end{multline}
Using the value of $c$ specified in (\ref{c}), we get 
\begin{multline*}
\rr \|\hat{z}(k+1) -z(k) \|^2 + \nu \|\hat{x}(k+1)-x^* \|^2\\ 
\geq c \|\hat{w}(k+1) -w^* \|_G^2.
\end{multline*}
which proves (\ref{equivalclaim-1}) and hence the claim.\\

We continue with the proof of the theorem. Using the statement of the previous claim in (\ref{main eq-MC}), we have 
\begin{align*}
\EX[ \| w(k+1) - \w^*\|_G^2  ] &\leq  (1-\underset{\bar{}}{\delta}(k)) \EX[ \| w(k) - \w^* \|_G^2 ]  + \\
& \qquad  \qquad \bar{\delta}(k)\,\frac{1}{1+c} \EX[ \|w(k) - \w^* \|_G^2]\\
& \leq (1-\underset{\bar{}}{\delta}(k) +\bar{\delta}(k)\,\frac{1}{1+c}) \times \\ 
& \qquad \EX[ \|w(k) - \w^* \|_G^2]
\end{align*}
Recalling the definition of $\underset{\bar{}}{\delta}(k)$ and $\bar{\delta}(k)$, we have
$$
 \bar{\delta}(k)\,\frac{1}{1+c}-\underset{\bar{}}{\delta}(k)\leq 0
$$
for all $k>k'$, such that
\begin{equation}
k' \geq \frac{1}{\ln \gamma}\,\ln \Big(\frac{(1+c)\pi_{\text{min}} - \pi_{\text{max}}}{b(2+c)} \Big)
\end{equation}
which completes the proof of the theorem.
\end{proof}

We next prove the linear convergence of $x(k)$. We fix $k'$ and use the index $l=k-k'$ for simplicity. Also, set $\eta := 1- \alpha_{k'}$ (we drop the subscript $k'$ for ease of notation). We have the following inequality for $x(k)$, similar to (\ref{main eq-MC}), 
\begin{multline}\label{main eq-MC111}
\EX[ \| x(l+1) - \x^*\|^2  ] \leq  (1-\underset{\bar{}}{\delta}(k)) \EX[ \| x(l) - \x^* \|^2 ]  + \\ \bar{\delta}(k) \EX[ \|\hat{x}(l+1) - \x^* \|^2],
\end{multline}
where $\x^* := (x^*,...,x^*)$. From (\ref{equivalclaim}), we have 
 \begin{equation*}
\EX[ \|\hat{x}(l+1) - \x^* \|^2] \leq\frac{1}{\nu} \EX[ \|w(l) - \w^* \|^2]
\end{equation*}
Using the above inequality in (\ref{main eq-MC111}), we get
\begin{multline*}
\EX[ \| x(l+1) - \x^*\|^2  ] \leq  (1-\underset{\bar{}}{\delta}(k)) \EX[ \| x(l) - \x^* \|^2 ]  + \\ \frac{\bar{\delta}(k) }{\nu} \EX[ \|w(l) - \w^* \|^2],
\end{multline*}
From the linear convergence of $w$, we have 
\begin{multline*}
\EX[ \| x(l+1) - \x^*\|^2  ] \leq  (1-\underset{\bar{}}{\delta}(k)) \EX[ \| x(l) - \x^* \|^2 ]  + \\ \frac{\bar{\delta}(k) \eta^l }{\nu} \EX[ \|w(0) - \w^* \|^2],
\end{multline*}
For a large enough $k'$, we have $0<\pip_{\text{min}} \leq \pi_{\text{min}}-b \gamma^{k'} <  \pi_{\text{max}}+b\gamma^{k'} \leq \pip_{\text{max}} <1$ for some $\pip_{\text{max}}$,  $\pip_{\text{min}}$. Then we have, 
\begin{multline*}
\EX[ \| x(l+1) - \x^*\|^2  ] \leq  (1-\pip_{\text{min}}) \EX[ \| x(l) - \x^* \|^2 ]  + \\ \frac{\pip_{\text{max}} \, \eta^l }{\nu} \EX[ \|w(0) - \w^* \|^2].
\end{multline*}
Iterating the above inequality, we get 
\begin{multline*}
\EX[ \| x(l+1) - \x^*\|^2  ] \leq  (1-\pip_{\text{min}})^{l+1} \EX[ \| x(0) - \x^* \|^2 ]  + \\ 
+ \frac{(1-\pip_{\text{min}})^{l} \pip_{\text{max}} \,  }{\nu} \EX[ \|w(0) - \w^* \|^2] +....\\
...+\frac{(1-\pip_{\text{min}})\pip_{\text{max}}  \, \eta^{l-1} }{\nu} \EX[ \|w(0) - \w^* \|^2]\\ + \frac{\pip_{\text{max}} \, \eta^l }{\nu} \EX[ \|w(0) - \w^* \|^2].
\end{multline*}
Assume $1-\pip_{\text{min}} \leq \eta$. The proof is the same for $\eta \leq 1-\pip_{\text{min}}$ with their roles interchanged. Set $M : = \frac{\pip_{\text{max}} \, \EX[ \|w(0) - \w^* \|^2] }{\nu}$. We have  
\begin{multline*}
\EX[ \| x(l+1) - \x^*\|^2  ] \leq  (1-\pip_{\text{min}})^{l+1} \EX[ \| x(0) - \x^* \|^2 ]  + \\ 
+ M\eta^l \Big\{  1+ \frac{1-\pip_{\text{min}}}{\eta} +.. +\Big(\frac{1- \pip_{\text{min}}}{\eta}\Big)^l \Big\}
\end{multline*}
so that 
\begin{multline*}
\EX[ \| x(l+1) - \x^*\|^2  ] \leq  (1-\pip_{\text{min}})^{l+1} \EX[ \| x(0) - \x^* \|^2 ]  + \\ 
+ \frac{M}{1-\frac{(1-\pip_{\text{min}})}{\eta}}\eta^l
\end{multline*}
Let $W>0$ be a constant such that
\begin{equation*}
\Big(\frac{1-\pip_{\text{min}}}{\eta}\Big)^l (1-\pip_{\text{min}})
+ \frac{M}{\big(1-\frac{(1-\pip_{\text{min}})}{\eta} \big)\EX[ \| x(0) - \x^* \|^2} \leq W
\end{equation*}
Then,
\begin{equation*}
\EX[ \| x(l+1) - \x^*\|^2  ] \leq  W\eta^l  \,\EX[ \| x(0) - \x^* \|^2
\end{equation*}
which proves the linear convergence of $x(\cdot)$.\\

Note that $k'$ may be not be well defined in all cases. However, for MC's with uniform steady state distribution, it will be always well defined.
\section{Numerical Experiment}
In this section we evaluate the performance of the algorithm on a simple estimation problem. We consider this tutorial problem since our main concern here is to compare the proposed asynchronous version with the existing synchronous version of ADMM. The distributed estimation problem involves estimating a parameter $x^*$, using noisy measurements performed  at each node. We set the local measurement as $a_i := x^* + N(0,1) \in \mathbb{R}^n$, where $N(0,1)$  represents Gaussian noise  with mean zero and unit variance. The problem can be formulated as
\begin{equation}\label{sim}
\begin{aligned}
\text{minimize} & \qquad \sum_{i=1}^{N} \|x_i-a_i\|^2 , \\
\text{  subject to } & \qquad x_i =x_j \,\,\forall \, i,j.
\end{aligned}
\end{equation}
The Markov chain considered is a simple random walk with states $\V :=\{1,...,N \}$ and transition probabilities given as :   
\begin{equation*}
P_{i,i+1} = P_{i,i-1} = \alpha , \,\,\, i=2,...,N-1.
\end{equation*}
Also, $P_{12}=1-P_{11}=P_{N,N}=1-P_{N,N-1}=\alpha$.  The steady state distribution of such a Markov chain is known to be
\begin{equation}\label{beta}
\pi_i = \frac{\beta^i(1 - \beta)}{1- \beta^{N}},
\end{equation}
where $\beta = \frac{1}{1-\alpha}$.  We consider $n=10$ and $N=10$ nodes. The results have been plotted in Figure 1. 
\begin{figure}
\begin{center}
\includegraphics[width=9cm,height=7.5cm]{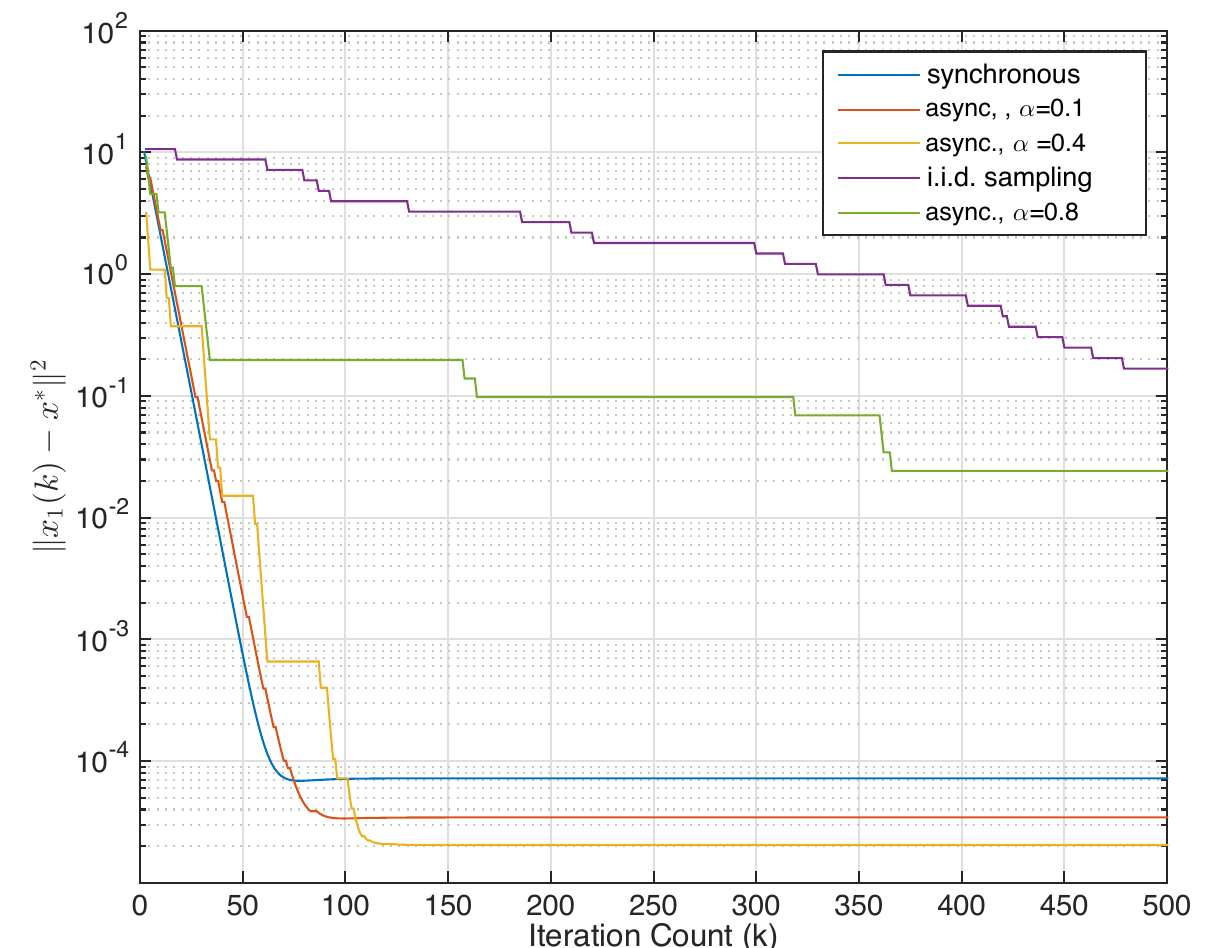}
\end{center}
\caption{Plot of the optimality error vs iteration count }
\end{figure}
Some points to note here are :
\begin{itemize}
\item The most significant observation here is that the asynchronous version with $\alpha=0.1$ performs on par with the synchronous version even though the iteration complexity of the former is quite low. A possible reason for this is the form of the function in (\ref{sim}). If the noise is small, the values $a_i$ fall within a small range and hence the $f_i()$'s can be quite similar.\\

\item As $\alpha $ increases from $\alpha_{}=0.1$ to  $\alpha_{}=0.8$, the performance of the algorithm degrades significantly. A possible explanation here is given by studying the corresponding steady state distributions. For $\alpha= 0.1$, from (\ref{beta}) we have $\pi_{\text{min}}=0.05 $ and   $\pi_{\text{max}}= 0.14$. Hence, all the nodes here have a significant probability of getting activated once the chain has mixed, ensuring progress. Also, we note that the difference $\pi_{\text{max}} - \pi_{\text{min}}$ is small here which makes $k'$ small in (\ref{k'}). For $\alpha = 0.8$, we have $\pi_{i} \leq 0.006,\, i\leq 7 $ and   $\pi_{\text{max}}= 0.8$. Once the chain mixes, there is very low probability of encountering states other than $10$. We note that the algorithm shows a linear convergence rate.\\

\item The i.i.d sampling scheme fares the worst. One possible explanation could be since the nodes are selected at random, while updating there is a low probability of encountering neighbouring nodes that have updated recently. This may explain the periods of no progress prevalent in the staircase pattern of the graph.

\end{itemize}

\bibliographystyle{IEEEtran}
\bibliography{biblo}

\end{document}